\def\mf{\mathfrak}
\def\ft{\mathfrak{t}}
\def\al{\alpha}
\newcommand{\bbS}{{\mathbb{S}}}
\newcommand{\bbC}{{\mathbb{C}}}
\newtheorem{thm}{Theorem}
\newtheorem{prop}[thm]{Proposition}
\newtheorem{exmp}[thm]{Example}
\newtheorem{rmk}[thm]{Remark}
\numberwithin{equation}{section}
\numberwithin{thm}{section}
\title[Polynomial Assignments]{Polynomial Assignments for\\ Bott-Samelson manifolds}
\date{February 29, 2016.} 
\author{Gouri Shankar Seal}
\address{Department of Mathematics, Northeastern University}
\email{seal.g@husky.neu.edu}
\author{Catalin Zara}
\address{Department of Mathematics, UMass Boston}
\email{catalin.zara@umb.edu}
\begin{document}

\begin{abstract}
Polynomial assignments for a torus $T$-action on a smooth manifold $M$ were introduced in \cite{ggk1}; they form a module over $\bbS(\ft^*)$, the algebra of polynomial functions on $\ft$, the Lie algebra of $T$.
In this paper we describe the assignment module $\mathcal{A}_T(M)$ for a natural $T$-action on a Bott-Samelson manifold $M = BS^I$, and present a method for computing generators.
\end{abstract}

\maketitle

\tableofcontents

\section{Introduction}
The study of polynomial assignments was initiated by Ginzburg, Guillemin, and Karshon (\cite{ggk1}) in conjunction with abstract moment maps, as a method of understanding geometric information  from underlying combinatorial data of group action on manifolds.  Recent works on this topic include \cite{gsz2014} and \cite{gm2015} - the latter is an extension to topological group actions. 

In this paper we describe the assignment module $\mathcal{A}_T(M)$ for a natural action of a torus $T$ on a Bott-Samelson manifold $M = BS^I$ and present a method for computing generators for this $\bbS(\ft^*)$-module, where $\bbS(\ft^*)$ is the algebra of polynomial functions on $\ft$, the Lie algebra of $T$. (See Section~\ref{sec:bs-def} for the definition of a Bott-Samelson manifold $BS^I$ and of the $T$-action on it.) The method has been implemented using Maple $15$, Python and the commutative algebra package Singular.

A Bott-Samelson $T$-space $M=BS^I$ is equivariantly formal with finite fixed point set $M^T$; that allows one to identify $\mathcal{A}_T(M)$ with a subring of $\text{Maps}(M^T, \bbS(\ft^*))$. The conditions that a map that assigns a polynomial to each fixed point must satisfy in order to represent an element of $\mathcal{A}_T(M)$ are encoded in the fixed point data - fixed points and weights of isotropy representations ; we describe that data in Section ~\ref{sec:fixpointdata}. We record the fixed point data in an associated digraph, $\Gamma_I$, with arrows labeled by elements of $\ft^*$; the labels are thought of as homogeneous polynomials of degree one in $\bbS(\ft^*)$ and are in fact roots for the Lie algebra defining the Bott-Samelson manifold.  

In Section~\ref{sec:assigngraph} we describe the combinatorial conditions that an element of $\text{Maps}(M^T, \bbS(\ft^*))$ must satisfy in order to represent an assignment in $\mathcal{A}_T(M)$. Equivariant cohomology classes in $H_T(M)$ can also be represented as maps from $M^T$ to $\bbS(\ft^*)$; the subalgebra of maps that represent equivariant cohomology classes is denoted by $\mathcal{H}_T(M)$. Each equivariant cohomology class defines an assignment, hence $\mathcal{H}_T(M)$ is a subalgebra of $\mathcal{A}_T(M)$; the assignments in  $\mathcal{H}_T(M)$ are called \emph{cohomological}. When the Bott-Samelson manifold is of GKM-type (in this case, when the letters of the word $I$ are distinct), all assignments are cohomological, but in all other cases that is no longer true. The \emph{defect} module $\mathcal{A}_T(M)/\mathcal{H}_T(M)$ is a torsion module, and we compute it in a few cases.

To determine generators for the assignment module $\mathcal{A}_T(BS^I)$ we use an inductive method: $BS^I$ is a bundle over $\bbC P^1$ with fiber $BS^{I'}$, a Bott-Samelson space determined by a word $I'$ that consists of all-but-the-last letter of $I$. Using that, we describe how one can compute the assignment module $\mathcal{A}_T(BS^I)$ from the assignment module $\mathcal{A}_T(BS^{I'})$, first for a particular case (Section~\ref{sec:ex_inductive}) and then in the general case (Section~\ref{sec:gen_construction}). These computations have been implemented in \emph{Maple} by the first author. The assignment modules we have computed are \emph{free} $\bbS(\ft^*)$-modules, and we conjecture that that is the case for all Bott-Samelson manifolds. 

In the last section we describe an alternative construction of generators for the assignment module, based on ideas from Morse theory. A generic $\xi\in \ft$ determines an orientation of the edges of the graph $\Gamma_I$, and the oriented graph is acyclic.  Borrowing ideas from the construction of combinatorial Thom classes, we attempt to determine generators as assignments supported on the flow-up of a fixed point and satisfying natural normalization conditions. The situation is more complicated, as we ended up with vertices having multiple such assignments associated with them, and vertices for which no such assignments are needed. These computations have been implemented in \emph{Singular} (\cite{gp2008}) by the first author.

\emph{Acknowledgments:}  The authors thank Victor Guillemin, Sue Tolman, and  Jonathan Weitsman for many helpful discussions.

\section{Bott-Samelson Spaces}
Bott-Samelson spaces were introduced in \cite{bs1958} as spaces with compact group actions. 
In this section we present an equivalent construction  in the context of complex groups; for details of a related construction see, for example, \cite{g-k1994} or \cite{w2004}.

\subsection{Definition}\label{sec:bs-def}

Let  $G$ be a connected, complex semisimple Lie group with Lie algebra $\mf{g}$, $H$  a Cartan subgroup and $B$ a Borel subgroup of $G$ with $H\subset B \subset G$. 
Let $\Delta \subset \mf{h}^*$ be the set of roots and
\begin{equation}
\mf{g} = \mf{h} +\sum_{\al\in \Delta} \mf{g}_{\al }
\end{equation}
the corresponding Cartan decomposition. Choose a set of positive roots $\Delta^{+}$ and let 
\begin{equation}
\Sigma  = \{ \al_1, \al_2, \ldots, \al_r\} \subseteq \Delta^{+}
\end{equation}
be the simple roots. For an index $i \in [r]:= \{1, 2, \ldots, r\}$, let $P_i :=P_{\alpha_i}$ be the minimal parabolic subgroup associated with $\alpha_i$, and for a word
\begin{equation}
I = [i_1, i_{1}, \ldots, i_d] \in [r]^d, 
\end{equation}
of length $|I| = d$, let
\begin{equation*}
P_I = P_{i_d} \times P_{i_{d-1}} \times \dotsb \times P_{i_1}.
\end{equation*}
The group $B^d$ acts to the right on $P_I$ by
\begin{equation}
(p_d,\ldots,p_1) \cdot (b_d,\ldots,b_1) =  (p_d b_d,b_d^{-1}p_{d-1} b_{d-1},\ldots,b_{2}^{-1}p_1 b_1).
\end{equation}
The Bott-Samelson manifold $BS^I$ is the quotient
\begin{equation}
BS^I = P_I/{B^d} = P_{i_d} \times^B P_{i_{d-1}} \times^B \dotsb \times^{B} P_{i_1} /B
\end{equation}
and is a complex projective variety of dimension $d$.  The orbit through a point $(p_d, \ldots, p_1) \in P_I$ is denoted by $[p_d, \ldots, p_1] \in BS^I$.

Let $I = [i_d, i_{d-1}, \ldots, i_1]$ and $I' = [i_{d-1}, \ldots , i_{11}]$. The map $P_I \to P_{i_d}$ given by the projection onto the first factor is equivariant with respect to the corresponding actions of $B^d$ and $B$ and therefore induces a map  $\pi \colon BS^I \to P_{i_d}/B \simeq \bbC P^1$
\begin{equation}\label{eq:oneproj}
\pi([p_{i_d}, p_{i_{d-1}}, \ldots, p_{i_d}]) = [p_{i_d}]
\end{equation}
with fiber $P_{I'}/B^{d-1} = BS^{I'}$; hence $BS^I$ is a $BS^{I'}$-bundle over $\bbC P^1$.

The group $P_{i_d}$ acts on $BS^I$ by left multiplication on the first factor; if $T$ is the maximal real torus included in $B$, then $T$ acts on the left on $Z_I$ by the action on the first factor:
\begin{equation}
t \cdot [p_d, \ldots, p_1] = [t p_d, \ldots, p_1].
\end{equation}
and the map \eqref{eq:oneproj} is $T$-equivariant. (It is, in fact, $P_{i_d}$-equivariant.)

There is a real/compact-group description of  $BS^I$, as follows: for every $j \in [n]$, we have $P_{j}/B \simeq K_j/T$ for a compact Lie group $K_j$ with $T \subset K_j \subset P_j$. The torus $T^d$ acts on 
\begin{equation*}
K_I = K_{i_d} \times \dotsb \times K_{i_1}
\end{equation*}
by
\begin{equation}
(k_d,\ldots,k_1) \cdot (t_d,\ldots,t_1) =  (k_d t_d,t_d^{-1}k_{d-1} t_{d-1},\ldots,t_{2}^{-1}k_1 t_1).
\end{equation}
Then 
\begin{equation*}
BS^I = K_I/{T^d} = K_{i_d} \times^T K_{i_{d-1}} \times^T \dotsb \times^{T} K_{i_1} /T. 
\end{equation*}
The orbit through  $(k_d, \ldots, k_1) \in K_I$ is denoted by $[k_d, \ldots, k_1] \in BS^I$.

We are interested in describing the assignment ring $\mathcal{A}_T(BS^I)$ for this action of $T$ on $BS^I$; more precisely, we want to construct generators for this $\bbS(\mf{t}^*)$-module and to understand the defect module $\mathcal{A}_T(BS^I)/\mathcal{H}_T(BS^I)$.

\subsection{Fixed-Point Data} \label{sec:fixpointdata}
For all indices $j \in [n]$, the fixed points of the $T$-action on $(P_j/B)^T \simeq (K_j/T)^T$ are given by
\begin{equation}
(K_j \cap N(T))/T \leqslant W,
\end{equation}
where $W$ is the Weyl group of $G$ and $N(T)$ is the normalizer of $T$. There are two such fixed points: one is $[1]$, the class of the identity, and the other is $[q_j]$, the class of the element $q_j \in K_j \cap N(T)$ whose co-adjoint action on $\ft^*$ is given by 
\begin{equation}
s_j :=s_{\alpha_j} \colon \ft^* \to \ft^*,
\end{equation}
the reflection determined by the simple root $\alpha_j$ (\cite{GHZ-2006}). Hence the fixed points for the $T$-action on $BS^I$ for $I= [j]$ are indexed by the set  $\{[0\cdot j], [1\cdot j] \} = \{[0], [j]\}$ of subwords of  $I$.
The weight of the (complex) isotropy representation of $T$ on the tangent space at $[0\cdot j]$ is $\alpha_j$, and the weight at $[1\cdot j]$ is $s_j(\alpha_j) = -\alpha_j$.

In general, the fixed-point set $(BS^I)^T$ is indexed by the subwords of $I$; these are expressions of the form 
\begin{equation}
J = J_{\epsilon} := [\epsilon_{1}i_1 , \ldots, \epsilon_{d} i_d]
\end{equation}
where $\epsilon \colon [d] \to \{0,1\}$ and $\epsilon_j := \epsilon(j)$. Hence, for general $I$, the $T$-action on $BS^I$ has $2^{|I|}$ fixed points. For a subword $J_{\epsilon}$ of $I$, the weights of the isotropy representation on the tangent space at the corresponding fixed point $p_\epsilon:= J_{\epsilon}$ are
\begin{equation}
s_{i_d}^{\epsilon_d}\alpha_{i_d}, \quad s_{i_{d}}^{\epsilon_d}s_{i_{d-1}}^{\epsilon_{d-1}}\alpha_{i_{d-1}}, \quad \ldots, \quad s_{i_d}^{\epsilon_d} \dotsb s_{i_1}^{\epsilon_1}\alpha_{i_1},
\end{equation}
where $s_j^1 = s_j$ and $s_j^0 = 1$, the identity transformation of $\mf{t}^*$.

\begin{rmk}
To save space we will sometimes remove the commas and brackets and denote, for example, $[0,1,2]$ by $012$.
\end{rmk}

\begin{exmp}\label{exm:bs21}
Let $I = [i_1, i_2] =  [2,1] =21$ be a word of length 2. The fixed point set $(BS^{21})^T$ is indexed by $\{00, 01, 20, 21\}$. The weights of the $T$-action are as follows:
\begin{align*}
\text{ at } 00:& \quad s_1^0\alpha_1 = \alpha_1 \quad \text{ and } \quad s_1^0s_2^0 \alpha_2 = \alpha_2\\
\text{ at } 01:& \quad s_1^1\alpha_1 = - \alpha_1 \quad \text{ and } \quad s_1^1s_2^0 \alpha_2 = s_1\alpha_2\\
\text{ at } 20:& \quad s_1^0\alpha_1 = \alpha_1 \quad \text{ and } \quad s_1^0s_2^1 \alpha_2 = -\alpha_2\\
\text{ at } 21:& \quad s_1^1\alpha_1 = -\alpha_1 \quad \text{ and } \quad s_1^1s_2^1 \alpha_2 = -s_1\alpha_2.
\end{align*}
\end{exmp}

\subsection{The Associated Graph}
The data in Example~\ref{exm:bs21} can be arranged in the following digraph (for each arrow, there is a second arrow in the reverse direction):
\begin{figure}[h]
\begin{tikzcd}[]
 01 \ar{r}[]{s_1\al_2} &
  21   \\
00   \ar{u}{\al_1} \ar{r}{\al_2} & 20   \ar{u}[swap]{\al_1}
\end{tikzcd}
$\Longrightarrow$
\begin{tikzcd}
1 \\
0 \ar{u}[swap]{\al_1}
\end{tikzcd}
\caption{The Graph of $BS^{[2,1]}$.}
\label{fig:1}
\end{figure}

The vertical arrows correspond to $P_1/B$; the horizontal arrows correspond to the fiber bundle $BS^{[2,1]} \to BS^{[1]}$: the bottom arrow corresponds to the fiber over $00$, which is a copy of $BS^{2}$, and the top arrow is the fiber over $01$, an $s_1$-twisted copy of $BS^{2}$. The weights of the opposite arrows are the opposite of the weights shown above: for example, the arrow $21 \to 20$ is labeled by $-\al_1$.

More general, to each Bott-Samelson space $BS^I$ we associate a decorated digraph $\Gamma_I$, with arrows labeled by roots,  constructed as follows:
\begin{itemize}
\item The vertices of $\Gamma_I$ are labeled by the $2^{|I|}$ subwords of $I$;
\item The edges of $\Gamma_I$ are of the form $J_{\epsilon}  J_{\epsilon'}$, for all subwords that differ  in exactly one position - for each edge there are two arrows, in both directions of the edge;
\item If $J_{\epsilon} = [\epsilon_{1}i_1 , \ldots, \epsilon_{d} i_d]$ and $J_{\epsilon'}$ differ exactly on position $j$, then the arrow $J_{\epsilon} \to J_{\epsilon'}$ is labeled by 
\begin{equation}
\alpha_{\epsilon, \epsilon'} = s_{i_d}^{\epsilon_d} \dotsb s_{i_j}^{\epsilon_j}\alpha_{i_j};
\end{equation}
note that $\alpha_{\epsilon', \epsilon} = -\alpha_{\epsilon, \epsilon'}$.
\end{itemize}

\begin{exmp} Let $I = [2, 1, 3]=213$; then $BS^{[2,1,3]} \to BS^{[3]}$ is a fiber bundle with fiber $BS^{[2,1]}$. The associated graph is:

\begin{figure}[H]
\begin{tikzcd}[back line/.style={densely dotted}, row sep=2em, column sep=2em]
& 
013  \ar{rr}{s_{3}s_1\al_2} 
  & & 
213   \\
003 \ar[crossing over]{rr}[near end]{s_{3}\al_2}  \ar{ur}{s_{3}\al_1}
  & & 
203 \ar{ur}[near start]{s_{3}\al_1}\\
& 
010 \ar[back line]{rr}[near start]{s_1\al_2} \ar[back line]{uu}[near start]{\al_3}
  & & 
210 \ar{uu}[near start]{\al_3}  \\
000 \ar[crossing over]{uu}{\al_3}    \ar[back line]{ur}{\al_1} \ar{rr}{\al_2} & & 
200 \ar[crossing over]{uu}[near start, swap]{\al_3}   \ar{ur}[swap]{\al_1}
\end{tikzcd}
\caption{The Graph of $BS^{[2,1,3]}$.}
\label{fig:figure121}
\end{figure}
Note that $BS^{[2,1,3]} \to BS^{[3]}$ is a bundle with fiber $BS^{[2,1]}$ (the bottom face, with vertices at points of the form $(*, *, 0)$, and $BS^{[2,1]} \to BS^{[1]}$ is a fiber bundle with fiber $BS^{[2]}$:
\begin{equation}
\xymatrix{
BS^{[2]} \ar[r] & BS^{[2,1]} \ar[d] \ar[r]& BS^{[2,1,3]} \ar[d]\\
& BS^{[1]}  &  BS^{[3]} } \,
\end{equation}
\end{exmp}

\section{Assignments for Bott-Samelson Spaces}

\subsection{Definitions}\label{sec:assigngraph}
The decorated digraph $\Gamma_I = (V_I, E_I, \alpha)$ encodes all the information needed to determine the polynomial 
assignments for the $T$-action on $BS^I$ (for details of the construction, see \cite{gsz2014}).
The $\bbS(\ft^*)$-algebra $\mathcal{A}_T(BS^I)$ is the $\bbS(\ft^*)$-subalgebra of $\text{Maps}(V_{I}, \bbS(\ft^*))$ consisting of those maps $f \colon V_{I} \to \bbS(\ft^*)$ with the property that
\begin{equation}
f(J_{\epsilon}) \equiv f(J_{\epsilon'}) \pmod{\alpha_{{\epsilon},{\epsilon'}}}
\end{equation}
for all edges $J_\epsilon J_{\epsilon'}$ of the graph $\Gamma_I$.

\begin{exmp}
If $I = [\, ]$ is the empty word, then $BS^{I} = \{ \text{pt} \}$ and $\mathcal{A}_T(BS^{[\,]}) = \bbS(\ft^*)$ is a free $\bbS(\ft^*)$-module, with a basis given by the assignment $A_{[\,]}$ that takes the value 1 at the fixed point $[\, ]$.
\end{exmp}

\begin{exmp}
If $I = [j]$ is a word of length 1, then $\mathcal{A}_T(BS^{[j]})$ consists of those maps $f \colon \{ [0], [j] \} \to \bbS(\ft^*)$ such that 
\begin{equation}
f([j]) \equiv f([0]) \pmod{\alpha_j}.
\end{equation} 
That is a free $\bbS(\ft^*)$-module, with a basis given by $\{ A_{[0]}, A_{[j]}\}$, where $A_{[0]}(J_{\epsilon}) = 1$ for all $J_{\epsilon}$ and 
\begin{equation}
A_{[j]} (J_{\epsilon})= 
\begin{cases}
0, & \text{ if } J_{\epsilon} = [0]\\
\alpha_j, & \text{ if }  J_{\epsilon} = [j]
\end{cases}.
\end{equation}
We record this information as a $2\times 2$ matrix
\begin{equation}
A_I = 
\begin{bmatrix}
1 & 0 \\ 1 & \alpha_j
\end{bmatrix}
\end{equation}
with columns corresponding to the assignments $A_{[0]}$ and $A_{[j]}$, and rows indexed by the fixed point set $(BS^{I})^T$, hence by the subwords of $I$.
\end{exmp}

\subsection{Cohomological Assignments}\label{sec:cohassign}
There is an injective morphism of $\bbS(\ft^*)$-algebras 
\begin{equation}
H_T(BS^I) \to \mathcal{A}_T(BS^I)
\end{equation}
from equivariant cohomology to assignments; therefore $H_T(BS^I)$ is identified with an $\bbS(\ft^*)$-subalgebra $\mathcal{H}_T(BS^I)$; assignments that are in the subalgebra $\mathcal{H}_T$ are called \emph{cohomological assignments}.

The subalgebra $\mathcal{H}_T(BS^I)$ is a \emph{free} $\bbS(\ft^*)$-module of rank $2^{|I|}$, with a basis given by cohomological assignments indexed by the fixed points, as follows: for $J = [\epsilon_1 i_1, \ldots, \epsilon_d i_d]$, let $H_J \colon (BS^I)^T \to \bbS(\ft^*)$ be defined by 
\begin{equation}
H_J(J') = 
\begin{cases}
(\alpha_{i_d}^{\epsilon_d})(s_{i_d}^{\epsilon_d'}\alpha_{i_{d-1}}^{\epsilon_{d-1}}) \dotsb (s_{i_d}^{\epsilon_d'}s_{i_{d-1}}^{\epsilon_{d-1}'}\dotsb s_{i_2}^{\epsilon'_2}\alpha_{i_{1}}^{\epsilon_{1}})
, & \text{ if } J \leqslant J' \\
0, & \text{ otherwise}
\end{cases}
\end{equation}
where $J \leqslant J' = [\epsilon_1' i_1, \ldots, \epsilon_d' i_d]$ if and only if $\epsilon_t \leqslant \epsilon_t'$ for all $t$. We record these classes in a matrix $H^I$ whose rows correspond to fixed points  and columns corresponding to classes, with  $H^I_{J',J} = H_J(J')$.

\begin{exmp} \label{exm:H21}
If $I = [2,1]$, then the matrix $H^{[2,1]}$ is given below:
\begin{equation*}
H^{[2,1]} = 
\kbordermatrix{
         &  H_{00} & H_{20} & H_{01} & H_{21}  \\
00  &   1 & 0 & 0 & 0 \\
20 &   1 & \al_2 & 0 & 0 \\
01 & 1 & 0 & \al_1 & 0 \\
21 & 1 & s_1\al_2 & \al_1 & \al_1s_1\al_2
} = \begin{bmatrix} 
H^{[2]} & 0 \\s_1H^{[2]} & \al_1 s_1H^{[2]}
\end{bmatrix}\; .
\end{equation*}
\end{exmp}

In general, if we list the fixed points in increasing right-to-left lexicographical order, the matrix $H^{[I,j]}$ can be computed from $H^{I}$ as follows:
\begin{equation}
H^{[I,j]} = \begin{bmatrix} 
H^{I} & 0 \\s_jH^{I} & \al_j s_jH^{I}
\end{bmatrix}\; .
\end{equation}

For $BS^{[2,1]}$ at each fixed point, the weights are pairwise non-collinear. Spaces that satisfy this condition are said to be of \emph{GKM type}.  

\begin{prop}
The $T$-space $BS^I$ is of GKM type if and only if the letters of $I$ are distinct.
\end{prop}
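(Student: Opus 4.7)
The proposition splits naturally into an easy direction and a slightly more substantial one; the proof in both cases reduces to inspecting the list of weights

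\begin{equation*}
w_k(\epsilon) = s_{i_d}^{\epsilon_d} s_{i_{d-1}}^{\epsilon_{d-1}} \cdots s_{i_k}^{\epsilon_k}\alpha_{i_k}, \qquad k=1,\dots,d,
\end{equation*}
at the fixed point $p_{\epsilon}$.

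\textbf{The easy direction.} For the ``only if'' implication I argue by contrapositive. Suppose two letters coincide, $i_k=i_l$ with $k<l$. Consider the identity fixed point $p_0$, obtained by taking $\epsilon=(0,\dots,0)$. Every reflection in the formula above is then trivial, so the weight list at $p_0$ reduces to $\alpha_{i_1},\alpha_{i_2},\dots,\alpha_{i_d}$. Since $i_k=i_l$, the weights $w_k(0)=\alpha_{i_k}$ and $w_l(0)=\alpha_{i_l}$ coincide, hence are collinear, so $BS^I$ is not of GKM type.

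\textbf{The substantive direction.} For the ``if'' implication, assume the letters of $I$ are pairwise distinct. Fix an arbitrary $\epsilon$ and indices $k<l$; I must show $w_k(\epsilon)$ and $w_l(\epsilon)$ are not collinear. Writing $W_m=s_{i_d}^{\epsilon_d}\cdots s_{i_m}^{\epsilon_m}$ one has $W_k=W_l\cdot u$ where
\begin{equation*}
u = s_{i_{l-1}}^{\epsilon_{l-1}} s_{i_{l-2}}^{\epsilon_{l-2}} \cdots s_{i_k}^{\epsilon_k}.
\end{equation*}
Because $W_l$ is an isometry of $\mathfrak{t}^*$ (in fact an element of $W$ permuting roots), $w_k(\epsilon)=W_l(u\alpha_{i_k})$ and $w_l(\epsilon)=W_l(\alpha_{i_l})$ are collinear if and only if $u\alpha_{i_k}$ and $\alpha_{i_l}$ are collinear. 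So the question reduces to a purely root-system question about $u$.

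The crucial observation is that $u$ lies in the subgroup $W_L\leqslant W$ generated by the reflections $s_{i_k},s_{i_{k+1}},\dots,s_{i_{l-1}}$. By the hypothesis that the letters of $I$ are distinct, none of these reflections equals $s_{i_l}$, and moreover the corresponding simple roots $\alpha_{i_k},\dots,\alpha_{i_{l-1}}$ are pairwise distinct. The subgroup $W_L$ is then the Weyl group of the sub-root-system they span, and it preserves the lattice $\Lambda=\mathbb{Z}\alpha_{i_k}+\cdots+\mathbb{Z}\alpha_{i_{l-1}}$. Therefore $u\alpha_{i_k}\in\Lambda$. On the other hand, the simple roots of $\mathfrak{g}$ are linearly independent, and $\alpha_{i_l}$ is not among $\alpha_{i_k},\dots,\alpha_{i_{l-1}}$, so neither $\alpha_{i_l}$ nor $-\alpha_{i_l}$ lies in $\Lambda$. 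Hence $u\alpha_{i_k}\neq\pm\alpha_{i_l}$, which means $u\alpha_{i_k}$ and $\alpha_{i_l}$ are not collinear, completing the proof.

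\textbf{Expected difficulty.} The one step that needs care is the invariance of $\Lambda$ under $W_L$; this is the standard fact that the parabolic subgroup generated by a subset of simple reflections is itself the Weyl group of the sub-root-system spanned by those simple roots. Everything else is bookkeeping with the definition of the weights, together with the fact that $W$ acts by isometries on $\mathfrak{t}^*$.
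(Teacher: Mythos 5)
Your proof is correct and follows essentially the same route as the paper's: for the forward direction you look at the weights at the identity fixed point, and for the converse you conjugate the alleged collinearity by the common prefix $W_l$, observing that $u\alpha_{i_k}$ lands in the span of $\alpha_{i_k},\ldots,\alpha_{i_{l-1}}$, which cannot contain a nonzero multiple of $\alpha_{i_l}$ by linear independence of the simple roots. The only cosmetic difference is that you phrase the last step via the root lattice of the parabolic subgroup and reduce to $u\alpha_{i_k}\neq\pm\alpha_{i_l}$, whereas the paper argues directly with the $\mathbb{R}$-span; both are fine.
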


\begin{proof}
The weights at the fixed point $[0,0,\ldots, 0]$ are $\{\alpha_i\mid i \in I\}$, hence if $BS^I$ is of GKM type, then the letters of $I$ must be distinct.

For the converse, assume that $I = [i_1, \ldots, i_d]$ has distinct letters, but at some fixed point $J_{\epsilon} = [\epsilon_1 i_1, \ldots, \epsilon_d i_d]$, two of the weights are collinear. Then there exist $\ell<m$ in $[d]$ such that 
\begin{equation}
s_{i_d}^{\epsilon_d} \dotsb s_{i_m}^{\epsilon_m}\alpha_{i_m} = r s_{i_d}^{\epsilon_d} \dotsb s_{i_m}^{\epsilon_m} \dotsb s_{i_\ell}^{\epsilon_\ell}\alpha_{i_\ell},
\end{equation}
hence
\begin{equation}
\al_{i_m} =  r s_{i_{m-1}}^{\epsilon_{m-1}}\ldots s_{i_\ell}^{\epsilon_\ell} \al_{i_\ell} \in \text{span}\{\al_{i_\ell}, \al_{i_{\ell+1}}, \dotsb , \al_{i_{m-1}}\},
\end{equation}
which is impossible, since the simple roots are linearly independent. Hence if the weights at $[0, \ldots, 0]$ are pairwise non-collinear, so are the weights at any other fixed point.
\end{proof}

The only spaces $BS^I$ for which for which all assignments are cohomological are those of GKM type, hence those for which the letters of $I$ are distinct.

\subsection{Delta Classes}
What happens when $BS^I$ is not of GKM type? Then not all assignments are cohomological, and in this section we present examples of such classes, associated to points, edges, or faces of the graph $\Gamma_I$.

\begin{exmp} Let $I = [2, 1, 2]$; then $M = BS^{[2,1,2]} \to BS^{[2]}$ is a fiber bundle with fiber $BS^{[2,1]}$. The associated graph is:

\begin{figure}[H]
\begin{tikzcd}[back line/.style={densely dotted}, row sep=2em, column sep=2em]
& 
012  \ar{rr}{s_{2}s_1\al_2} 
  & & 
212   \\
002 \ar[crossing over]{rr}[near start]{s_{2}\al_2}  \ar{ur}{s_{2}\al_1}
  & & 
202 \ar{ur}[near start]{s_{2}\al_1}\\
& 
010 \ar[back line]{rr}[near start]{s_1\al_2} \ar[back line]{uu}[near start]{\al_2}
  & & 
210 \ar{uu}[near start]{\al_2}  \\
000 \ar[crossing over]{uu}{\al_2}    \ar[back line]{ur}{\al_1} \ar{rr}{\al_2} & & 
200 \ar[crossing over]{uu}[near start]{\al_2}   \ar{ur}[swap]{\al_1}
\end{tikzcd}
\caption{The Graph of $BS^{[2,1,2]}$.}
\label{fig:figure212}
\end{figure}
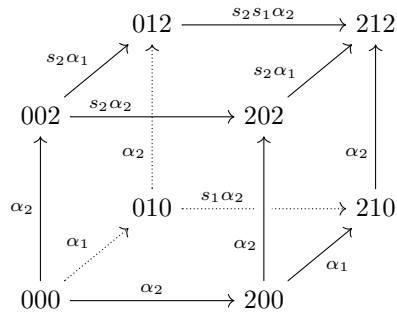

The weights at $000$ are $\al_1, \al_2, \al_2$ and are not pairwise non-collinear. Define $\delta = \delta_{000} \colon (BS^{[2,1,2]})^T \to \bbS(\ft^*)$,
\begin{equation}
\delta_{000} (v) = \begin{cases}  
\al_1 \al_2, & \text{ if } v = 000\\
0, & \text{ otherwise.}
\end{cases}
\end{equation}
Then $\delta$ is an assignment, but not a cohomological one: if it were, by the localization formula,
\begin{equation}
\int_M \delta = \frac{\al_1\al_2}{\al_1\al_2\al_2} = \frac{1}{\al_2},
\end{equation}
and this is not a polynomial; hence $\delta$ is a non-cohomological assignment.
\end{exmp}

More examples of delta classes, associated to edges and faces, are presented below. Some of them are cohomological, and some are not. For higher dimensional examples, the fact that the integral is a polynomial, while necessary, is not also sufficient to guarantee that a class is cohomological. (Necessary and sufficient conditions are given in \cite{p2014} and \cite{gsz2014}).

\begin{exmp}\label{exmp:1231} Let $M = BS^{[1,2,3,1]}$ and let $e$ be the 1-dimensional face (edge) with vertices at $1001$ and $1031$. A neighborhood of $e$ in the associated graph is shown below:

\begin{figure}[h]
\begin{tikzcd}[back line/.style={densely dotted}, row sep=1.5em, column sep=1.52em]
& & 0001 & & 
0031 & & \\
& &          & &          & & \\
1201 & & 
1001 \ar{uu}{\al_1} \ar{dd}[swap]{-\al_1} \ar{ll}[swap]{s_1\al_2} \ar{rr}{s_1\al_3}  & & 
1031\ar{rr}{s_1s_3\al_2} \ar{uu}[swap]{-s_1s_3\al_1} \ar{dd}{-\al_1} & & 1231 \\
& &          & &          & & \\
& & 
1000 & & 
10301 & &
\end{tikzcd}
\end{figure}
Then a class supported on the edge $e$ would assign polynomials of degree 3 to 1001 and 1031. However, if $\langle \alpha_3, \alpha_1\rangle = 0$ (for example, in type $A$),  then $s_1s_3\al_1 = -\al_1$ then a delta class $\delta_e$ given by
\begin{equation}
\delta_e(v) = 
\begin{cases}
\al_1 \, s_1\al_2, & \text{ if } v = 1001\\
\al_1 \, s_1s_3\al_2, & \text{ if } v = 1031\\
0, & \text{ otherwise}
\end{cases}
\end{equation}
assigns 0 or a polynomial of degree only two to each vertex.
\end{exmp}

\begin{exmp}\label{exmp:12312} Let $M = BS^{[1,2,3,1,2]}$ and let $f$ be the 2-dimensional face with vertices at $10010$, $10012$, $10312$, and $10310$. A neighborhood of $f$ in the associated graph is shown below:

\begin{figure}[h]
\begin{tikzcd}[back line/.style={densely dotted}, row sep=1.5em, column sep=1.5em]
10000 & & 
12010 & & 
12310 & & 
10300 \\
 & &  &   &  & & \\
00010 & & 
10010 \ar{uu}{s_1\al_2}  \ar{rr}{s_1\al_3} \ar{dd}{\al_2} \ar{ll}{\al_1} \ar{uull}{-\al_1} & & 
10310 \ar{uu}{s_1s_3\al_2} \ar{uurr}{-\al_1} \ar{rr}{-s_1s_3\al_1} \ar{dd}{\al_2} & & 
00310 \\
& &  & f &  & & \\
00012 & & 
10012 \ar{ll}{s_2\al_1} \ar{lldd}{-s_2\al_1} \ar{dd}{s_2s_1\al_2} \ar{rr}{s_2s_1\al_3}  & & 
10312 \ar{rr}{-s_2s_1s_3\al_1} \ar{dd}[swap]{s_2s_1s_3\al_2} \ar{ddrr}{-s_2\al_1} & & 
00312 \\
 & &  & &  & & \\
10002 & & 
12012 & & 
123120 & & 
10302 
\end{tikzcd}
\end{figure}
If $\langle \alpha_3, \alpha_1\rangle = 0$ (in type $A$, for example), then a delta class $\delta_f$ given by
\begin{equation}
\delta_f(v) = 
\begin{cases}
\al_1 \, s_1\al_2, & \text{ if } v = 10010\\
s_2\al_1 \, s_2s_1\al_2, & \text{ if } v = 10012\\
\al_1 \, s_1s_3\al_2, & \text{ if } v = 10310\\
s_2\al_1 \, s_2s_1s_3\al_2, & \text{ if } v=10312\\
0,  & \text{ otherwise}
\end{cases}
\end{equation}
is supported on the vertices of $f$ and assigns 0 or a polynomial of degree two to each vertex. Note that, although \emph{ a priori} a class supported on $f$ would be a class determined by three outside conditions (hence would assign a polynomial of degree three at each vertex), $\delta_f$ assigns a polynomial of degree two at each vertex: that is a consequence of the fact that at each vertex of $f$, the outside weights contain repeated directions.

\end{exmp}

\section{Inductive Construction of Assignments}

\subsection{Example} \label{sec:ex_inductive}
In this section we determine all the assignments on $M=BS^{[2,1,2]}$ and we show that $\mathcal{A}_T(M)$ is a free $\bbS(\ft^*)$-module. In subsequent sections, we expand the ideas of this construction to the general case.

The key remark is that $BS^{[2,1,2]} \to BS^{[2]}$ is a fiber bundle with fiber $BS^{[2,1]}$. On the associated graph, that is shown by the fact that the bottom face (the fiber over $[0]$) is a copy of $BS^{[2,1]}$, and the top face (the fiber over $[0]$) is an $s_2$-twisted copy of $BS^{[2,1]}$. Therefore, if $\eta \in \mathcal{A}_T(BS^{[2,1,2]})$, then
\begin{equation}
\eta|_{[.,0]} \in \mathcal{A}_T(BS^{[2,1]}) \quad \text{ and } \quad s_2\eta|_{[.,2]} \in \mathcal{A}_T(BS^{[2,1]}) .
\end{equation}

Let 
\begin{equation}
A = A^{[2,1]} = H^{[2,1]} = \begin{bmatrix}
1 & 0 & 0 & 0 \\
1 & \al_2 & 0 & 0 \\
1 & 0 & \al_1 & 0 \\
1 & s_1\al_2 & \al_1 & \al_1s_1\al_2
\end{bmatrix}
\end{equation}
be the matrix determined in Example~\ref{exm:H21}, with rows and columns indexed by $J \in \{[0,0], [2,0],[0,1],[2,1]\}$. Then
\begin{align*}
\eta([J,0]) & = a_{[0,0,0]} A_{[0,0]}(J) + a_{[2,0,0]} A_{[2,0]}(J) \\
& + a_{[0,1,0]} A_{[0,1]}(J) + a_{[2,1,0]} A_{[2,1]}(J)\\
\eta([J,2]) & = a_{[0,0,2]} s_2A_{[0,0]}(J) + a_{[2,0,2]} s_2A_{[2,0]}(J) \\
& + a_{[0,1,2]} s_2A_{[0,1]}(J) + a_{[2,1,2]} s_2A_{[2,1]}(J);
\end{align*}
with the action of $s_2$ on $\bbS(\ft^*)$ the extension of the action on $\ft^*$. Hence
\begin{equation}
\eta = 
\begin{bmatrix}
\eta([., 0]) \\ \eta([., 2])
\end{bmatrix}
= 
\begin{bmatrix}
A & 0 \\ 0 & s_2A
\end{bmatrix}
\begin{bmatrix}
a_{[., 0]} \\ a_{[., 2]}
\end{bmatrix}.
\end{equation}

Since $s_2 f \equiv f \pmod{\al_2}$ for all $f \in \bbS(\ft^*)$,  the remaining conditions for $\eta$ to be an assignment,
\begin{equation}
\eta([J,2]) \equiv \eta([J,0]) \pmod{\al_2},
\end{equation}
can be written as
\begin{equation}\label{eq:syst_al}
A 
\begin{bmatrix}
\Delta_{[0,0]} \\ \Delta_{[2,0]} \\ \Delta_{[0,1]} \\ \Delta_{[2,1]} 
\end{bmatrix} \equiv 0 \pmod{\al_2}, 
\end{equation}
where $\Delta_{J} = a_{[J,2]} - a_{[J,0]}$ for all $J \in \{[0,0], [2,0],[0,1],[2,1]\}$.

To determine all solutions of the system \eqref{eq:syst_al}, we first reduce the entries of $A$ modulo $\al_2$, then row-reduce the resulting matrix. In our case
\begin{equation}
A\mod{\al_2} = 
\begin{bmatrix}
1 & 0 & 0 & 0 \\
1 & 0 & 0 & 0 \\
1 & 0 & \al_1 & 0 \\
1 & -\langle \al_2, \al_1\rangle \al_1 & \al_1 & -\langle \al_2, \al_1\rangle \al_1^2
\end{bmatrix},
\end{equation}
where $\langle \al_2, \al_1\rangle$ is the corresponding Cartan integer. 

If $\langle \al_2, \al_1\rangle \neq 0$, then
\begin{equation}
\text{rref} (A\mod\al_2) = 
\begin{bmatrix}
1 & 0 & 0 & 0 \\
0 & 1 & 0 & \al_1 \\
0 & 0 & 1 & 0 \\
0 & 0 & 0 & 0 
\end{bmatrix}
\end{equation}

The general solution of the system \eqref{eq:syst_al} is then
\begin{equation}
\begin{bmatrix}
\Delta_{[0,0]} \\ \Delta_{[2,0]} \\ \Delta_{[0,1]} \\ \Delta_{[2,1]} 
\end{bmatrix} = 
t\begin{bmatrix}
0 \\ - \al_1 \\ 0 \\ 1
\end{bmatrix}
\end{equation}
with $t \in \bbS(\ft^*)$, arbitrary. Then $\eta \in \mathcal{A}_T(BS^{[2,1,2]})$ if and only if
\begin{align*}
a_{[0,0,0]} & = b_{[0,0,0]} &  a_{[0,0,2]} & = b_{[0,0,0]} + \al_2 b_{[0,0,2]} \\
a_{[0,1,0]} & = b_{[0,1,0]} &  a_{[0,1,2]} & = b_{[0,1,0]} + \al_2 b_{[0,1,2]} \\
a_{[2,1,0]} & = b_{[2,1,0]} &  a_{[2,1,2]} & = b_{[2,1,0]} + b_{[2,1,2]} \\
a_{[2,0,0]} & = b_{[2,0,0]} &  a_{[2,0,2]} & = b_{[2,0,0]} + \al_2 b_{[2,0,2]} - \al_1 b_{[2,1,2]}
\end{align*} 
with the $b$'s arbitrary polynomials in $\bbS(\ft^*)$. These relations can be written as
\begin{equation}
\begin{bmatrix}
a_{[., 0]} \\ a_{[., 2]} 
\end{bmatrix}
= 
\begin{bmatrix}
I_4 & 0 \\ I_4 & U^{[2,1,2]}
\end{bmatrix}
\begin{bmatrix}
b_{[., 0]} \\ b_{[., 2]} 
\end{bmatrix}
\end{equation}
with
\begin{equation}
U^{[2,1,2]} = \begin{bmatrix}
\al_2 & 0 & 0 & 0 \\
0 & \al_2 & 0 & -\al_1 \\
0 & 0 & \al_2 & 0 \\
0 & 0 & 0 & 1
\end{bmatrix}.
\end{equation}

Then 
\begin{equation}
\eta = 
\begin{bmatrix}
\eta([., 0]) \\ \eta([., 2])
\end{bmatrix}
= 
\begin{bmatrix}
A & 0 \\ 0 & s_2A
\end{bmatrix}
\begin{bmatrix}
a_{[., 0]} \\ a_{[., 2]}
\end{bmatrix} = \begin{bmatrix}
A &  0 \\ s_2A & s_2A\cdot U^{[2,1,2]}
\end{bmatrix}
\begin{bmatrix}
b_{[., 0]} \\ b_{[., 2]} 
\end{bmatrix}
\end{equation}
hence every assignment on $BS^{[2,1,2]}$ can be written as a linear combination of the columns of the matrix
\begin{equation*}
\begin{bmatrix}
1 & 0 & 0 & 0 & 0 & 0 & 0 & 0 \\
1 & \al_2 & 0 & 0 & 0 & 0 & 0 & 0 \\
1 & 0 & \al_1 & 0 & 0 & 0 & 0 & 0 \\
1 & s_1\al_2 & \al_1 & \al_1s_1\al_2& 0 & 0 & 0 & 0 \\
1 & 0 & 0 & 0 & \al_2 & 0 & 0 & 0 \\
1 & -\al_2 & 0 & 0 & \al_2&  -\al_2^2& 0 & \al_1\al_2 \\
1 & 0 & s_2\al_1 & 0 & \al_2 & 0 & \al_2s_2\al_1 & 0 \\
1 & s_2s_1\al_2 & s_2\al_1 & s_2\al_1 s_2s_1\al_2& \al_2 & \al_2s_2s_1\al_2 & \al_2s_2\al_1 &  - \langle \al_1, \al_2 \rangle \al_2s_2s_1\al_2
\end{bmatrix} 
\end{equation*}

These columns are linearly independent, hence they form a basis of the $\bbS(\ft^*)$-module $\mathcal{A}_T(BS^{[2,1,2]})$; therefore one can use 
\begin{equation}
A^{[2,1,2]} = 
\begin{bmatrix}
A^{[2,1]} & 0 \\ s_2A^{[2,1]} & s_2A^{[2,1]}\cdot U^{[2,1,2]}
\end{bmatrix}
\end{equation}
as a matrix recording a basis for $\mathcal{A}_T(BS^{[2,1,2]})$. Note that
\begin{equation}
H^{[2,1,2]} = 
\begin{bmatrix}
H^{[2,1]} & 0 \\ s_2H^{[2,1]} & \al_2 s_2H^{[2,1]}
\end{bmatrix}
\end{equation}
would correspond to the situation $U = \al_2 I$.

The first seven columns of $A^{[2,1,2]}$ are the same as the first seven columns of $H^{[2,1,2]}$; in particular, those seven assignments are cohomological. The eight column, $(A^{[2,1,2]})_{[2,1,2]}$, is different, because $U \neq \al_2 I_4$. One can check that
\begin{equation}
(A^{[2,1,2]})_{[2,1,2]} =  - \langle \al_1, \al_2 \rangle (A^{[2,1,2]})_{[2,0,2]}  + \delta_{[2,0,2]}.
\end{equation}
In particular, one can replace the eight assignment class by $\delta_{[2,0,2]}$. This proves that, for $M = BS^{[2,1,2]}$, the defect module $\mathcal{A}_T(M)/\mathcal{H}_T(M)$ is generated by a delta assignment,  $\delta_{[2,0,2]}$: every assignment on $M$ can be written as a combination of a cohomological assignment and $\delta_{[2,0,2]}$.

\subsection{General Construction} \label{sec:gen_construction}
The construction in the general case follows the same procedure.

Let $I$ be a word of length $|I| = d$, $I' = [I, j]$, and $\alpha = \alpha_j$. Then the projection on the last letter, 
\begin{equation}
\pi_j \colon \Gamma_{]I,j]} \to \Gamma_{[j]}
\end{equation}
is the combinatorial description of the fiber bundle $BS^{[I,j]} \to BS^{[j]}$; the fibers are
\begin{align*}
\Gamma_{[I,j]}^{[0]} & := \pi_j^{-1}([0]) \simeq \Gamma_I \\
\Gamma_{[I,j]}^{[j]} & := \pi_j^{-1}([j]) \simeq s_j\Gamma_I .
\end{align*}

Suppose the assignment module $\mathcal{A}_T(BS^I)$ is a free $\bbS(\ft^*)$-module of rank $2^{d}$, with a basis indexed by the subwords of $I$. Let $A = A^{I}$ be the matrix whose columns record the vectors of this basis.

If $\eta \in \mathcal{A}_T(BS^{I'})$, then the restrictions to the two fibers are assignments on the fibers, hence
\begin{align*}
\eta([., 0]) \in \mathcal{A}_T(BS^I) & \Longrightarrow \eta([., 0]) = \sum_J a_{[J,0]} A_J(.) \\
s_j\eta([., j]) \in \mathcal{A}_T(BS^I) & \Longrightarrow \eta([., j]) = \sum_J a_{[J,j]} s_jA_J(.) ,
\end{align*}
with the summations over the subwords $J$ of $I$. Hence 
\begin{equation}
\eta = 
\begin{bmatrix}
\eta([., 0]) \\ \eta([., j])
\end{bmatrix}
 = 
 \begin{bmatrix}
 A & 0 \\ 0 & s_jA
 \end{bmatrix}
 \begin{bmatrix}
 a_[., 0] \\ a_[., j]
 \end{bmatrix}.
\end{equation}

The compatibility conditions along the edges $[J, 0] - [J, j]$ are satisfied if and only if
\begin{equation}\label{eq:syst_gen}
A [ \Delta_{[.]} ] \equiv 0 \pmod{\al},
\end{equation}
where $\Delta_{[.]} = a_{[., j]} - a_{[., 0]}$.

A second, essential, assumption is that the matrix $A\mod{\alpha}$ can be transformed into a reduced row echelon form through row operations and column swapping - this is not a trivial assumption, since we are operating with matrices of polynomials, hence over the \emph{ring} $\bbS(\ft^*)$. If $J'$ are the indices of the basic variables and $J''$ the indices of the free variables, we assume that we have written the columns of $A$ in that order;  then $A$ can be transformed into 
\begin{equation}
\Phi_{\alpha}(A) = 
\begin{bmatrix}
I & C \\ 0 & 0 
\end{bmatrix},
\end{equation}
Then the general solution of the system of congruences \eqref{eq:syst_gen} is 
\begin{equation}
\begin{bmatrix}
a_{[.,0]} \\ a_{[J', j]} \\ a_{[J'', j]}
\end{bmatrix}
=
\begin{bmatrix}
I & 0 \\
I & 
\begin{bmatrix}
\alpha I & -C \\ 0 & I
\end{bmatrix}
\end{bmatrix}
\begin{bmatrix}
b_{[.,0]} \\ b_{[J', j]} \\ b_{[J'', j]}
\end{bmatrix}.
\end{equation}

Let 
\begin{equation}
U^{[I,j]} = \begin{bmatrix}
\alpha I & -C \\ 0 & I
\end{bmatrix};
\end{equation}
then
\begin{equation}
\eta = 
\begin{bmatrix}
\eta([., 0]) \\ \eta([., j])
\end{bmatrix}
 = 
 \begin{bmatrix}
 A & 0 \\ s_{\alpha}A  & s_\alpha A\cdot U^{[I,j]}
 \end{bmatrix}
 \begin{bmatrix}
 b_[., 0] \\ b_[., j]
 \end{bmatrix},
\end{equation}
hence every assignment for $BS^{[I,j]}$ can be written as a linear combination of the columns of the matrix
\begin{equation}
A^{[I,j]} = 
\begin{bmatrix}
A^I & 0 \\ s_j A^I & s_j A^I \cdot U^{[I,j]}
\end{bmatrix}.
\end{equation}

Since the columns of $A^I$ form a basis for $\mathcal{A}_T(BS^I)$, the matrix $A^I$ is invertible over the field of fractions of $\bbS(\ft^*)$. Then $s_j A^I$ is invertible, and since $U$ is also invertible, it follows that $A^{[I,j]}$ is invertible over the field of fractions of $\bbS(\ft^*)$. Therefore the columns of $A^{[I,j]}$ are independent over $\bbS(\ft^*)$, hence the columns of $A^{[I,j]}$ form a basis of $\mathcal{A}_T(BS^{[I,j]})$.

\subsection{Transition Formula}
We have seen that if $I$ has distinct letters, then $BS^I$ is of GKM-type and all assignments are cohomological; hence we can take $A^I = H^I$. In, general, suppose 
\begin{equation}
A^I = H^I V^I
\end{equation}
with $V^I$ a matrix with entries in $Q(\ft^*)$, the field of fractions of $\bbS(\ft^*)$. Then, under the assumptions of the previous section,
\begin{align*}
A^{[I,j]} = 
\begin{bmatrix}
A^I & 0 \\ s_j A^I & s_j A^I \cdot U^{[I,j]}
\end{bmatrix}
= 
\begin{bmatrix}
H^I & 0 \\ s_j H^I & \al_j s_j H^I 
\end{bmatrix}
\begin{bmatrix}
V^I & 0 \\ \partial_j V^I & \frac{1}{\al_j} s_jV^I \cdot U^{[I,j]}
\end{bmatrix},
\end{align*}
where 
\begin{equation}
\partial_j (f) = \frac{1}{\al_j} (s_j f - f)
\end{equation}
is the divided difference operator, extended to matrices with entries in $Q(\ft^*)$. Therefore
\begin{equation}
A^{[I,j]} = H^{[I,j]} V^{[I,j]},
\end{equation}
where
\begin{equation}
V^{[I,j]} = %
\begin{bmatrix}
V^I & 0 \\ \partial_j V^I & \frac{1}{\al_j} s_jV^I \cdot U^{[I,j]}
\end{bmatrix}.
\end{equation}
In particular, under our assumptions, each assignment can be written in a unique way as a linear combination \emph{over the field of fractions} $Q(\ft^*)$  of the cohomological assignments that occur as columns of $H$; the cohomological assignments are those for which the coefficients are polynomials. 

For example, the delta assignment $\delta_e$ on $BS^{[1,2,3,1]}$ in Example~\ref{exmp:1231} can be written as
\begin{equation}
\delta_e = -\frac{\al_1+\al_2}{\al_1} H_{1001} + \frac{1}{\al_1} H_{1201} - \frac{1}{\al_1} H_{1031};
\end{equation}
hence $\delta_e$ is not cohomological. Similarly, the delta assignment $\delta_f$ for $BS^{[1,2,3,1,2]}$ in Example~\ref{exmp:12312} can be written as a linear combination of $H_{10010}$, $H_{10310}$, $H_{10012}$, $H_{12012}$, and $H_{10312}$, but not all coefficients are polynomials (in fact, none is), so $\delta_f$ is non cohomological.

\subsection{Software}
We have extensively used the Coxeter package written by John Stembridge and MAPLE $15$ for our calculations. Using notations from the last section our input is the Lie-type of the Lie group and the word $I^{\prime} =[I,\alpha]$ of length $d+1$ that defines the Bott-Samelson space  $BS^{I^{\prime}}$. Using the Cartan matrix for the given Lie-type and simple reflections from Coxeter we next generate the one-skeleton of $BS^{I^{\prime}}$. We store it as a $2^{d+1} \times (d+1)$ matrix $M$ whose $(i,j)$ entry is a list of the form $[v_i,v_j,\alpha_{i,j}]$. Here $v_i$ is the $i^{th}$ fixed point, $v_j$ is the $j^{th}$ neighboring fixed point of $v_i$ and $\alpha_{i,j}$ is the weight vector pointing from $v_i$ towards $v_j$.  We initialize the iteration by starting from $[1]$ as a basis of assignment cohomology of a point as a module over $S(\mathfrak{t}^*)$. Then, in each iteration step we row reduce the basis matrix from the previous step such that while performing the reduction step we stay inside the polynomial ring $S(\mathfrak{t}^*)$. Therefore, using the general construction outlined in the last section we produce the two matrices 
\begin{eqnarray*}
\Phi_{\alpha}(A) = \begin{bmatrix} I & C \\ 0 & 0   \end{bmatrix}, U = \begin{bmatrix}  \alpha I & -C \\ 0 & I                 \end{bmatrix}.   
\end{eqnarray*}
The basis matrix $A^{I^{\prime}}$ is then given by
\begin{eqnarray*}
A^{I^{\prime}} = \begin{bmatrix}   A^I & 0 \\ s_{\alpha}A^I & s_{\alpha}A^I.U   \end{bmatrix}.
\end{eqnarray*}

\section{Morse-Type Generators}

\subsection{Generating Ideals}
Let $BS^I$ be a Bott-Samelson space, associated to a word $I$. A vector $\xi \in \ft$ is called \emph{polarizing} if $\alpha(\xi) \neq 0$ for all roots; let $\mathcal{P} \subset \ft$ be the set of all polarizing vectors.

For a polarizing vector $\xi$, we say that an edge $(p,q)$ of the graph $\Gamma_I$ is \emph{ascending} if $\alpha_{p,q}(\xi) > 0$, and is \emph{descending} if $\alpha_{p,q}(\xi) < 0$.

The Cartan-Killing form  on $\ft$ allows us to canonically identify $\ft$ and $\ft^*$: for $\beta \in \ft^*$, there is a unique $X_{\beta} \in \ft$ such that $\beta(\xi) = X_{\beta} \cdot \xi$. Moreover, $\alpha \cdot \beta = X_{\alpha}\cdot  X_{\beta}$ for all $\alpha, \beta \in \ft^*$.

If $\alpha, \beta$ are roots, then
\begin{align*}
s_{\alpha}\beta(\xi) & = \beta(\xi) - 2\frac{\beta \cdot \alpha}{\alpha \cdot  \alpha} \alpha(\xi) = X_{\beta}\cdot \xi -  2\frac{X_\beta \cdot X_\alpha}{X_\alpha \cdot X_\alpha}X_{\alpha}\cdot \xi \\
& = X_\beta \cdot \Bigl(  \xi - 2\frac{X_\alpha \cdot \xi}{X_\alpha \cdot X_\alpha}X_{\alpha}\Bigr) = \beta(s_{X_\alpha}\xi);
\end{align*}
in particular, if $\xi$ is polarizing, then $s_{X_\alpha}\xi$ is also polarizing.

\begin{prop}
For every choice of a polarizing vector $\xi$, if all the edges of $\Gamma_I$ are oriented as ascending edges, then the resulting oriented graph is acyclic.
\end{prop}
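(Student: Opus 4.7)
The plan is an induction on the word length $d = |I|$. The base $d = 0$ is immediate, since $\Gamma_I$ is then a single vertex with no edges. For the inductive step, write $I = [I', k]$ with $k = i_d$ and $|I'| = d-1$, and use the decomposition recalled at the start of Section~\ref{sec:gen_construction}: the graph $\Gamma_I$ is the union of the two fibers $\Gamma_I^{[0]} = \pi_k^{-1}([0])$ and $\Gamma_I^{[k]} = \pi_k^{-1}([k])$, together with the family of vertical edges $[J,0] \leftrightarrow [J,k]$ connecting corresponding vertices.

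First I would check that each of the two fibers, with its restriction of the ascending orientation, is acyclic. The bottom fiber is an isomorphic copy of the labelled graph $\Gamma_{I'}$, so the induction hypothesis applied to $\xi$ gives acyclicity. The top fiber is isomorphic to $\Gamma_{I'}$ but with every edge label transformed by $s_k$. An edge in the top fiber is therefore $\xi$-ascending if and only if the corresponding edge of $\Gamma_{I'}$ is $\xi'$-ascending, where $\xi' = s_{X_{\alpha_k}}\xi$; this uses the identity $(s_k\beta)(\xi) = \beta(\xi')$ recorded just before the proposition. The same computation shows that $\xi'$ is polarizing whenever $\xi$ is, so the induction hypothesis applied to $\Gamma_{I'}$ with the new polarizing vector $\xi'$ delivers acyclicity of the top fiber as well.

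Next I would handle the vertical edges. From the label formula of Section~\ref{sec:assigngraph} specialised to $j = d$, the vertical edge $[J, 0] \to [J, k]$ carries label $\alpha_k$. Consequently every vertical edge has the same orientation: all point upward if $\alpha_k(\xi) > 0$ and all point downward if $\alpha_k(\xi) < 0$, and one of these two cases must hold because $\xi$ is polarizing. Suppose, for contradiction, that $\Gamma_I$ with the ascending orientation contains a directed cycle. Horizontal edges preserve the $\epsilon_d$-coordinate while vertical edges change it by $\pm 1$, so any cycle must use the same number of upward and downward vertical edges. Since all vertical edges share the same orientation, the cycle uses no vertical edges at all, and therefore lies entirely inside one of the two fibers — contradicting the acyclicity established above.

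I expect the main obstacle to be purely bookkeeping: confirming that the top fiber's labels are genuinely obtained from those of $\Gamma_{I'}$ by applying $s_k$, and properly translating the reflection from $\ft^*$ to $\ft$ via the Cartan--Killing form so that the induction hypothesis can be invoked with $\xi' = s_{X_{\alpha_k}}\xi$. Both of these identifications are already assembled in the paragraph immediately preceding the proposition, so once they are cited the induction closes without further calculation.
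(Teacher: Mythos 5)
Your proposal is correct and follows essentially the same route as the paper: induction on $|I|$, using the fibration over $\bbC P^1$ to split $\Gamma_I$ into a copy of $\Gamma_{I'}$ and an $s_k$-twisted copy (acyclic by the inductive hypothesis applied to $\xi$ and to the polarizing vector $s_{X_{\alpha_k}}\xi$, respectively), and noting that all connecting edges carry the label $\alpha_k$ and hence point the same way, so no directed cycle can leave a fiber. Your write-up just makes explicit a couple of steps (the common label $\alpha_k$ of the vertical edges and the counting argument ruling them out of any cycle) that the paper states more briefly.
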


\begin{proof}
We prove the statement by induction on $|I|$. 

The base case $I = [i]$ is clear. Suppose the statement is true for $I$ and let $I' = [I, j]$ be an extension of $I$. Let $BS^{[I,j]} \to BS^{[j]}$ be the fiber bundle over $\bbC P^1$ with fiber $BS^I$. Any cycle of $\Gamma_{[I,j]}$ lies either in the fiber over $[0]$ or in the fiber over $[j]$, since all the arrows from one fiber to the other are oriented the same way. The fiber over $[0]$ is a copy of $BS^{I}$ and, by induction, the corresponding graph is acyclic. The fiber over $[j]$ is an $s_{j}$-twisted copy; the reflection $s_j$ induces an isomorphism of \emph{oriented} graphs between this copy and the original $BS^I$, the former oriented by $\xi$ and the latter 
by $s_{X_\alpha}\xi$. Hence the fiber over $[j]$ is also acyclic.
\end{proof}

Let $BS^{I}$ be a Bott-Samelson space and $\xi \in \mathcal{P}$ a polarizing vector.

We define a partial order on the vertices of $\Gamma_I$ (fixed points of the $T$-action on $BS^I$) by $p \preccurlyeq q$ if there exists a chain of ascending edges from $p$ to $q$ in the graph $\Gamma_I$, oriented by the polarizing vector $\xi$.

\begin{exmp}
For example, for $BS^{[2,1]}$, a polarizing $\xi$ in the positive Weyl chamber orients the graph $\Gamma_{[2,1]}$ as follows (the arrows indicate the ascending directions): 
\begin{figure}[h]
\begin{tikzcd}[]
 01 \ar{r}[]{s_1\al_2} &
21   \\
00   \ar{u}{\al_1} \ar{r}{\al_2} & 
20   \ar{u}[swap]{\al_1}
\end{tikzcd}
\end{figure}

The partial order has two maximal chains, $(0,0) \prec (0,1) \prec (2,1)$ and $(0,0) \prec (2,0) \prec (2,1)$; the vertices $(0,1)$ and $(2,0)$ are incomparable.
\end{exmp}

For a vertex $p$ of $\Gamma_I$, define the \emph{flow-up}
\begin{equation}
\mathcal{F}_p = \{ q \mid p \preccurlyeq q\}
\end{equation}
and let 
\begin{equation}
\mathcal{A}_T(BS_{\succcurlyeq p}^I)) = \{ \eta \in \mathcal{A}_T(BS^I)\mid \eta(q) = 0 \text{ if }  q \not\in \mathcal{F}_p\}
\end{equation}
be the subalgebra of assignments supported on $\mathcal{F}_p$. Let
\begin{equation}
\mathcal{I}_{(p)}= \{ \eta(p) \mid \eta \in \mathcal{A}_T(BS_{\succcurlyeq p}^I)\} \subset \bbS(\ft^*)
\end{equation}
be the ideal of $\bbS(\ft^*)$ defined by the values at $p$ of assignments supported on the flow-up from $p$. Then $\mathcal{I}_{(p)}$ is finitely generated, because $\bbS(\ft^*)$ is Noetherian.

For a vertex $p$, let $\mathcal{G}_p$ be a finite set of assignments in $\mathcal{A}_T(BS_{\succcurlyeq p}^I)$ such that the values at $p$ of the assignments in $\mathcal{G}_p$ generate the ideal $\mathcal{I}_{(p)}$ over $\bbS(\ft^*)$:
\begin{equation}
\mathcal{I}_{(p)}  = \langle \eta(p) \mid \eta \in \mathcal{G}_p \rangle_{\bbS(\ft^*)}.
\end{equation}
Then
\begin{equation}
\mathcal{G} = \bigcup_p \mathcal{G}_p 
\end{equation}
is a system of generators for $\mathcal{A}_T(BS^I)$; we will refer to these assignments as \emph{Morse generators}.

\subsection{Construction of Morse Generators}
Let $p$ be a fixed point for the $T$-action on $BS^I$, and let $\xi$ be a polarizing vector. Suppose $\mathcal{A}_T(BS^I)$ is a free $\bbS(\ft^*)$-module, with a basis recorded as columns of a matrix $A^I$; each column $A_J^{I}$ corresponds to a subword $J \subseteq I$; hence there are $n = 2^{|I|}$ columns.
Let $\{ v_1, v_2, \ldots, v_k\}$ be the complement of the flow-up $\mathcal{F}_p$.

Let $\eta \in \mathcal{A}_T(BS^I_{\succcurlyeq p})$ be an assignment supported on the flow-up $\mathcal{F}_p$. Then
\begin{equation}
\eta = \sum_J c^J A_J^I
\end{equation}
with $[c_J] \in \bbS(\ft^*)^{n}$. The condition $\eta \in \mathcal{A}_T(BS^I_{\succcurlyeq p})$ is equivalent to 
\begin{equation}
\sum_J c^J A_{J}^I(v_j) = 0
\end{equation}
for all $1 \leqslant j \leqslant k$. Let $F_J = [A_J^I(v_j)]_j \in \bbS(\ft^*)^k$ be the subcolumn of $A_J^I$ corresponding to rows in $\{v_1, \ldots, v_k\}$ and let 
\begin{equation}
Y = \langle F_J \mid J\subseteq I \rangle_{\bbS(\ft^*)} \subseteq \bbS(\ft^*)^k
\end{equation}
be the submodule of $\bbS(\ft^*)^k$ generated by the $F_J$'s. Then $\eta \in \mathcal{A}_T(BS^I_{\succcurlyeq p})$ if and only if 
\begin{equation}
[c^J]_J \in Syz(\langle F_J \mid J\subseteq I \rangle) \subseteq \bbS(\ft^*)^n, 
\end{equation}
the syzygy module of $Y$. Suppose that 
\begin{equation}
Syz(\langle F_J \mid J\subseteq I \rangle) = \langle g_1, g_2, \ldots, g_t \rangle_{\bbS(\ft^*)}
\end{equation}
is generated by $g_j \in \bbS(\ft^*)^n$, $1\leqslant j \leqslant t$. Define 
\begin{equation}
\eta_j^{(p)} = \sum_J (g_j)^J A_J^{I}.
\end{equation}
Then 
\begin{equation}
\mathcal{A}_T(BS_{\succcurlyeq p}^I)) = \langle \eta_j^{(p)} \mid j = 1, \ldots, t \rangle_{\bbS(\ft^*)}.
\end{equation}
Let 
\begin{equation}
r_j = \eta_j^{(p)}(p)
\end{equation}
for all $1\leqslant j \leqslant t$ and suppose the nonzero values are $r_1, \ldots, r_s$.

\begin{prop}
The ideal $\mathcal{I}_{(p)}$ is generated by $r_1, \ldots, r_s$:
\begin{equation}
\mathcal{I}_{(p)} = \langle r_1, r_2, \ldots, r_s \rangle_{\bbS(\ft^*)}.
\end{equation} 
\end{prop}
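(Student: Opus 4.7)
The plan is to verify the two inclusions $\mathcal{I}_{(p)} \supseteq \langle r_1,\ldots,r_s\rangle_{\bbS(\ft^*)}$ and $\mathcal{I}_{(p)} \subseteq \langle r_1,\ldots,r_s\rangle_{\bbS(\ft^*)}$ directly from the setup preceding the proposition. Both amount to unwinding definitions, and no new geometric or combinatorial input is required beyond the identification $\mathcal{A}_T(BS^I_{\succcurlyeq p}) = \langle \eta_1^{(p)},\ldots,\eta_t^{(p)}\rangle_{\bbS(\ft^*)}$ that was already established via the syzygy computation.

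For the inclusion $\supseteq$, I would simply observe that each $\eta_j^{(p)}$ lies in $\mathcal{A}_T(BS^I_{\succcurlyeq p})$ by construction, so its value $r_j = \eta_j^{(p)}(p)$ is automatically an element of $\mathcal{I}_{(p)}$; since the nonzero $r_j$'s are by assumption $r_1,\ldots,r_s$, and $0 \in \mathcal{I}_{(p)}$ trivially, the whole $\bbS(\ft^*)$-submodule they generate sits inside the ideal.

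For the reverse inclusion $\subseteq$, take an arbitrary element of $\mathcal{I}_{(p)}$; by definition it has the form $\eta(p)$ for some $\eta \in \mathcal{A}_T(BS^I_{\succcurlyeq p})$. Using the generating statement recalled in the preceding paragraph, write $\eta = \sum_{j=1}^{t} h_j\, \eta_j^{(p)}$ with coefficients $h_j \in \bbS(\ft^*)$, and evaluate at $p$:
\begin{equation*}
\eta(p) = \sum_{j=1}^{t} h_j\, \eta_j^{(p)}(p) = \sum_{j=1}^{t} h_j r_j.
\end{equation*}
The terms with $r_j = 0$ drop out, leaving $\eta(p)$ as an $\bbS(\ft^*)$-linear combination of $r_1,\ldots,r_s$.

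There is no genuinely hard step here; the only thing to be careful about is to not confuse $\mathcal{A}_T(BS^I_{\succcurlyeq p})$ (an $\bbS(\ft^*)$-module of assignments) with $\mathcal{I}_{(p)}$ (an ideal of $\bbS(\ft^*)$), and to note that the relabeling $r_1,\ldots,r_s$ as the nonzero values among the $r_j$'s is harmless because zero values contribute nothing to the generated ideal. The proposition is really a formal consequence of the two prior identifications, and the main work has already been done in setting up the syzygy module and producing the $\eta_j^{(p)}$.
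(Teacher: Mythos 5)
Your proof is correct and takes essentially the same approach as the paper's: the key step in both is to express an arbitrary $\eta \in \mathcal{A}_T(BS^I_{\succcurlyeq p})$ as an $\bbS(\ft^*)$-linear combination of the $\eta_j^{(p)}$ and evaluate at $p$. The only difference is cosmetic — you cite the already-established generating statement $\mathcal{A}_T(BS^I_{\succcurlyeq p}) = \langle \eta_1^{(p)},\ldots,\eta_t^{(p)}\rangle$ directly, whereas the paper re-derives it inside the proof via the syzygy decomposition $[c^J]_J = \sum_j a_j g^j$; and you also spell out the trivial $\supseteq$ inclusion, which the paper leaves implicit.
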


\begin{proof}
Let $f \in \mathcal{I}_{(p)} $ and $\eta \in \mathcal{A}_T(BS^I_{\succcurlyeq p})$ such that $f = \eta(p)$. Then
\begin{equation}
\eta = \sum_J c^J A_J^I
\end{equation}
and 
\begin{equation}
[c^J]_J \in Syz(\langle F_J \mid J\subseteq I \rangle)  =  \langle g_1, g_2, \ldots, g_t \rangle_{\bbS(\ft^*)}.
\end{equation}
Then
\begin{equation}
[c^J]_J = \sum_{j=1}^t a_j g^j
\end{equation}
for polynomials $a_j \in \bbS(\ft^*)$, and therefore 
\begin{equation}
\eta = \sum_J \sum_{j=1}^t a_j (g^j)^J A_J^I = \sum_{j=1}^t a_j \eta_j^{(p)},
\end{equation}
hence 
\begin{equation}
f = \eta(p) = \sum_{j=1}^t a_j \eta_j^{(p)} (p) = \sum_{j=1}^t a_j r_j = \sum_{j=1}^s a_j r_j,
\end{equation}
which proves that $r_1, \ldots, r_s$ generate $\mathcal{I}_{(p)}$.
\end{proof}

\subsection{Example} 

In this section we show how the general construction described in the previous section works in a particular example.

\begin{exmp}
Consider the Bott-Samelson manifold $BS^{[2,1,2]}$ in Lie-type $A_2$ and the generic polarization given by $\xi$ in the positive Weyl chamber. The polarized graph $\Gamma$ is

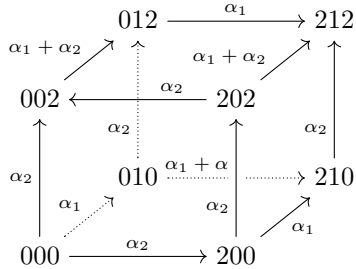
\begin{figure}[h]
\begin{tikzcd}[back line/.style={densely dotted}, row sep=1.5em, column sep=1.5em]
& 
012  \ar{rr}{\al_1}
  & & 
212   \\
002  \ar{ur}{\al_1+\al_2}
  & & 
202  \ar{ll}[near start, swap]{\al_2}   \ar{ur}[near start]{\al_1+\al_2}\\
& 
010 \ar[back line]{rr}[near start]{\al_1+\al_2} \ar[back line]{uu}[near start]{\al_2}
  & & 
210 \ar{uu}[near start]{\al_2}  \\
000 \ar[crossing over]{uu}{\al_2}    \ar[back line]{ur}{\al_1} \ar{rr}{\al_2} & & 
200 \ar[crossing over]{uu}[near start]{\al_2}   \ar{ur}[swap]{\al_1}
\end{tikzcd}
\caption{The Graph of $BS^{[2,1,2]}$.}
\label{fig:figure212-2}
\end{figure}

A basis for the assignment module is given by the columns of the matrix
\begin{equation*}
\begin{bmatrix}
1 & 0 & 0 & 0 & 0 & 0 & 0 & 0 \\
1 & \al_2 & 0 & 0 & 0 & 0 & 0 & 0 \\
1 & 0 & \al_1 & 0 & 0 & 0 & 0 & 0 \\
1 & \al_1+\al_2 & \al_1 & \al_1(\al_1+\al_2)& 0 & 0 & 0 & 0 \\
1 & 0 & 0 & 0 & \al_2 & 0 & 0 & 0 \\
1 & -\al_2 & 0 & 0 & \al_2&  -\al_2^2& 0 & \al_1\al_2 \\
1 & 0 & s_2\al_1 & 0 & \al_2 & 0 & \al_2(\al_1+\al_2) & 0 \\
1 & \al_1 & \al_1+\al_2 & \al_1(\al_1+\al_2) & \al_2 & \al_1\al_2 & \al_2(\al_1+\al_2) &   \al_1\al_2
\end{bmatrix} 
\end{equation*}
the rows and columns are indexed by subwords of $[2,1,2]$, in right-to-left lexicographic order
\begin{equation*}
[0,0,0], [2,0,0], [0,1,0], [2,1,0], [0,0,2], [2,0,2], [0,1,2], [2,1,2]
\end{equation*}
We also write the generators as rows, with columns corresponding to the values at the fixed points:

\begin{equation*}
\kbordermatrix{
& 000 & 200 & 010 & 210 & 002 & 202 & 012 & 212\\
\eta_1^{000} & 1 & 1 & 1 & 1 & 1 & 1 & 1 & 1 \\
\eta_1^{200} & 0 & \al_2 & 0  & \al_1 + \al_2 & 0  & -\al_2 & 0  & \al_1 \\
\eta_1^{010} & 0 & 0  & \al_1 & \al_1 & 0 & 0 & \al_1+\al_2 & \al_1 + \al_2\\
\eta_1^{210} & 0 & 0 & 0 & \al_1(\al_1+\al_2) & 0 & 0 & 0 & \al_1(\al_1+\al_2)\\
\eta_1^{002} & 0 & 0 & 0 & 0 & \al_2^2 & 0 & \al_2^2 & \al_2(\al_1+\al_2)\\
\eta_2^{002} & 0 & 0 & 0 & 0 & -\al_1\al_2 & 0 & -\al_1\al_2 & 0 \\
\eta_1^{202} & 0 & 0 & 0 & 0 & \al_2 & \al_2 & \al_2 & \al_2 \\
\eta_2^{202} & 0 & 0 & 0 & 0 & 0 & -\al_2^2 & 0 & \al_1\al_2\\
\eta_3^{202} & 0 & 0 & 0 & 0 & 0 & \al_1\al_2 & 0 & \al_1\al_2\\
\eta_1^{012} & 0 & 0 & 0 & 0 & 0 & 0 & \al_2(\al_1+\al_2) & \al_2(\al_1+\al_2) \\
\eta_1^{212} & 0 & 0 & 0 & 0 & 0 & 0 & 0 & \al_1\al_2(\al_1+\al_2)
}
\end{equation*}
These generators are not independent over $\bbS(\ft^*)$; the dependencies are:
\begin{align*}
\eta_2^{202} & =\eta_1^{002} -\al_2 \eta_1^{202}  \\
\eta_3^{202} & = \eta_2^{002} +\al_1 \eta_1^{202}  \\
\eta_1^{212} & = \al_1 \eta_1^{002} + \al_2 \eta_2^{002}.
\end{align*}
We can then eliminate $\eta_2^{202}$, $\eta_3^{202}$, and $\eta_1^{212}$ and obtain a basis.  The generating ideals $\mathcal{I}_J$ are as follows:
\begin{align*}
\mathcal{I}_{(000)} & = \langle 1 \rangle & \mathcal{I}_{(002)}   & =   \langle \al_1\al_2, \al_2^2 \rangle\\
\mathcal{I}_{(200)} & = \langle \al_2\rangle &\mathcal{I}_{(202)} & = \langle \al_2 \rangle\\
\mathcal{I}_{(010)} & = \langle \al_1\rangle &\mathcal{I}_{(012)} & = \langle \al_2(\al_1+\al_2) \rangle\\
\mathcal{I}_{(210)} & = \langle \al_1(\al_1+\al_2)\rangle &\mathcal{I}_{(212)}& = \langle \al_1\al_2(\al_1+\al_2) \rangle
\end{align*} 
Except for $\mathcal{I}_{(002)}$, all other ideals are principal; notice that $002$ is the highest vertex with repeated directions of the weights. That is not a coincidence, as we will show in a separate project \cite{gtz201x}.
\end{exmp}

\bibliographystyle{alpha}

\end{document}